\documentclass[12pt]{amsart}
\usepackage{color}
\usepackage[utf8]{inputenc}
\usepackage{amsfonts,amsmath}
\usepackage{amssymb,latexsym}
\usepackage{etoolbox}
\patchcmd{\section}{\scshape}{\bfseries\Large}{}{}
\makeatletter
\renewcommand{\@secnumfont}{\bfseries\Large}
\makeatother
\setlength\voffset{-2.8cm}
\setlength\hoffset{-1.9cm}
\setlength\textheight{25cm}
\setlength{\textwidth}{16cm}
\newtheorem{theorem}{Theorem}
\newtheorem{lemma}[theorem]{Lemma}

\newtheorem{corollary}[theorem]{Corollary}

\theoremstyle{remark}

\newcommand{\R}{\mathbb{R}}
\renewcommand{\leq}{\leqslant}
\renewcommand{\geq}{\geqslant}
\def\N{\mathbb{N}}
\def\al{\alpha}
\def\PP{\mathbb{P}}
\DeclareMathOperator{\dive}{div}
\DeclareMathOperator{\curl}{curl}

\def\ep{\varepsilon}
\def\om{\omega}
\def\Om{\Omega}
\def\oma{\omega_\alpha}
\newcommand\nl[2]{\|#2\|_{L^{#1}}}
\newcommand\nh[2]{\|#2\|_{H^{#1}}}
\newcommand\lip[1]{\|\nabla #1\|_{L^{\infty}}}
\pagestyle{plain}

\begin{document}
\title{From second grade fluids to the Navier-Stokes equations}
\author{A. V. Busuioc}

\address[A.V. Busuioc]{Université de Lyon, Université de Saint-Etienne  --
CNRS UMR 5208 Institut Camille Jordan --
Faculté des Sciences --
23 rue Docteur Paul Michelon --
42023 Saint-Etienne Cedex 2, France}
\email{valentina.busuioc@univ-st-etienne.fr}

\begin{abstract}
We consider the limit $\al\to0$ for a second grade fluid on a bounded domain with Dirichlet boundary conditions. We show convergence towards a solution of the Navier-Stokes equations under two different types of hypothesis on the initial velocity $u_0$. If the product $\nl2{u_0}\nh1{u_0}$ is sufficiently small we prove global-in-time convergence. If there is no smallness assumption  we obtain local-in-time convergence up to the time $C/\nh1{u_0}^4$.
\end{abstract}

\maketitle

\section{Introduction}
We consider in this paper the incompressible second grade fluid equations:
\begin{equation}\label{maineq}
  \partial_t (u-\al\Delta u)-\nu\Delta u +u\cdot\nabla(u-\al\Delta u)+\sum_j(u-\al\Delta u)_j\nabla u_j=-\nabla p, \qquad \dive u=0,
\end{equation}
where $\alpha$ and $\nu$ are some non-negative constants. The fluid is assumed to be enclosed in a bounded smooth region $\Omega$ of $\R^3$ and the homogeneous Dirichlet boundary conditions are imposed
\begin{equation}\label{2}
  u(t,\cdot)\bigl|_{\partial\Omega}=0\qquad \text{for all }t\geq0.
\end{equation}
The initial value problem is considered and we denote by $u_0$ the initial velocity:
\begin{equation}\label{3}
  u(0,x)=u_0(x).
\end{equation}

The equations \eqref{maineq} were deduced in \cite{dunn_thermodynamics_1974} from physical principles. Let us just mention here that the second grade fluids are characterized by the following fact: the stress tensor is a polynomial of degree two in the first two Rivlin-Ericksen tensors which are the deformation tensor $D$ and the tensor $(\partial_t+u\cdot\nabla)D$. The vanishing viscosity case $\nu=0$ is also known under the name $\al$--Euler or Euler--$\al$ equations and was later obtained via an averaging procedure performed on the classical incompressible Euler equations.

Two main boundary conditions were used for \eqref{maineq} in the mathematical literature: the no-slip boundary conditions and the frictionless slip Navier boundary conditions where the fluid is allowed to slip on the boundary without friction. The second boundary condition is more complex but allows for better mathematical results; also it has less physical relevance. The classical well-posedness results for \eqref{maineq} are the following (see \cite{cioranescu_weak_1997,cioranescu_existence_1984,galdi_existence_1993,galdi_further_1994} for the Dirichlet boundary conditions and \cite{busuioc_second_2003} for the Navier boundary conditions):
\begin{itemize}
\item In dimension two there exists a unique global $H^3$ solution if $u_0\in H^3$.
\item In dimension three there exists a unique local $H^3$ solution if $u_0\in H^3$. The solution is global if $u_0$ is small in $H^3$.
\end{itemize}
We call $H^3$ solution a divergence free vector field verifying the boundary conditions and the PDE \eqref{maineq} and who is bounded in time (up to time $t=0$) with values in $H^3(\Om)$. Let us also mention the paper \cite{bresch_existence_1998} where solutions in $W^{2,p}$, $p>3$, are constructed.

Let us observe that when $\al=0$ relation \eqref{maineq} becomes the Navier-Stokes equations
\begin{equation}\label{NS}
\partial_t u-\nu\Delta u+u\cdot\nabla u=-\nabla p  
\end{equation}
and when $\al=\nu=0$ it becomes the Euler equations. It is interesting to know if the solutions of \eqref{maineq} converge to the solutions of the limit equation when $\al\to0$ and $\nu>0$ is fixed or when $\al,\nu\to0$. This was already studied in several papers as we shall see below.

Let us first mention that in the absence of boundaries one can obtain $H^3$ estimates uniform in $\al$ and $\nu$ in both dimensions two and three and pass to the limit. This was performed in \cite{linshiz_convergence_2010}, see also \cite{busuioc_incompressible_2012} for a simpler proof. But such a result cannot hold true on domains with boundaries. Indeed, if the solutions of \eqref{maineq}-\eqref{2} are bounded in $H^3$ uniformly in $\al$ then one can easily pass to the limit $\al\to0$ and obtain at the limit a solution of the Navier-Stokes equations which must also be bounded in $H^3$. For such a solution to exist, the initial data must verify a compatibility condition. Indeed, one can apply the Leray projector to \eqref{NS} to obtain that $\partial_t u-\nu\PP\Delta u+\PP(u\cdot\nabla u)=0$. Since $u$ vanishes at the boundary, so does $\partial_t u$. We infer that $-\nu\PP\Delta u+\PP(u\cdot\nabla u)=0$ at the boundary. Observe that $u$ being in $H^3$ implies that these two terms are in $H^1$ so the trace at the boundary makes sense. By time continuity we infer that the initial data must verify the compatibility condition  $\nu\PP\Delta u_0=\PP(u_0\cdot\nabla u_0)$ at the boundary. This is of course in general not verified if we only assume  that $u_0\in H^3$ is divergence free and vanishing on the boundary.

We review now the results available for domains with boundary.

Concerning the limit $\al,\nu\to0$, we proved in \cite{busuioc_incompressible_2012} the expected convergence in 2D for weak $H^1$ solutions in the case of the Navier boundary conditions. We also proved the convergence in 3D but under the additional hypothesis that the solutions exist on a time interval independent of $\al$ and $\nu$.  We proved in \cite{busuioc_uniform_2016} that the hypothesis of existence of a uniform time existence is verified if $\nu=0$ and $\al\to0$. In the case of the Dirichlet boundary conditions, there is only the paper \cite{lopes_filho_convergence_2015} which shows convergence in 2D. The case of Dirichlet boundary conditions in 3D is open.

Concerning the limit $\al\to0$ and $\nu>0$ fixed, it was proved in \cite{iftimie_remarques_2002-1} the expected convergence for weak $H^1$ solutions. That result is stated for $\Om=\R^n$, $n=2,3$, but the proof relies only on energy estimates and standard compactness arguments so it goes through to bounded domains without difficulty (Dirichlet and  Navier boundary conditions likewise), see also \cite[Remarque 4]{iftimie_remarques_2002-1}. There is however a major drawback to the result of \cite{iftimie_remarques_2002-1}: the author assumes that the sequence of weak $H^1$ solutions exist on a time interval independent of $\al$ and proves convergence on any such time interval. This raises the question of proving the existence of such a uniform time interval. Let us also mention the paper \cite{arada_convergence_2016} where the author considers the 2D case with Navier boundary conditions and shows stronger convergence of solutions together with some estimates for the rate of convergence. Observe however that in dimension two the solutions are global in time so the hypothesis assumed in \cite{iftimie_remarques_2002-1} that the solutions exist on a uniform time interval is automatically satisfied.

In this paper we aim to prove that the hypothesis of \cite{iftimie_remarques_2002-1} about the existence of solutions on a uniform time interval is verified in various situations for Dirichlet boundary conditions. Since global existence of solutions holds true in dimension two for Dirichlet boundary conditions, in 2D the hypothesis of \cite{iftimie_remarques_2002-1} is automatically satisfied and the problem is settled. So we restrict ourselves to the 3D case. Recall that the Navier-Stokes equations in dimension three are locally well-posed for large data and  globally well-posed for small data. Likewise, we will prove two results on the uniform time of existence: a local result  for large data and a global result for small data. 

Surprisingly, we find that if the $H^1$ norm of the initial data is sufficiently small and if $\al$ is sufficiently small too, then the solutions of \eqref{maineq} are global. Let us emphasize that the smallness of the initial data is measured only in the $H^1$ norm (in fact in a weaker space, see Theorem \ref{globalthm}  in the next section) and not in the $H^3$ norm as required by the classical global well-posedness result for \eqref{maineq}. It was expected to find that, if the $H^1$ norm of the initial data is small then the maximal time of existence of \eqref{maineq} goes to infinity as $\al\to0$. Indeed the limit equations, \textit{i.e.} the Navier-Stokes equations, are globally well-posed for small $H^1$ initial data and so we expect convergence for the times of existence of solutions. But we did not expect the maximal time of existence to actually be infinite if $\al$ is sufficiently small.

For large data, we prove in particular that if $\al$ is sufficiently small then the time existence of the solution has a lower bound that depends only on the $H^2$ norm of the initial velocity. This should be compared with previous results on local existence of solutions where the time of existence depends on the $H^3$ norm or the $W^{2,p}$, $p>3$, norm of the initial velocity. When $\al\to0$ we obtain convergence of solutions of \eqref{maineq} towards solutions of \eqref{NS} up to the time $C/\nh1{u_0}^4$. 

The plan of the paper is the following. In the next section we state and comment our results. In Section \ref{globalsect} we prove the global result for small data. In the last section, we prove our ``uniform local existence'' result.

\section{Statement of the results}

Since we are interested in the limit $\al\to0$ with $\nu$ fixed, we will assume throughout this paper that $\al\leq1$ and $\nu>0$.

Let us first state the result of \cite{iftimie_remarques_2002-1} which motivates the present work.
\begin{theorem}[see \cite{iftimie_remarques_2002-1}] \label{teorema}
Let $\nu>0$, $T>0$ and $u_\al$ some $H^1$ solutions of \eqref{maineq} defined up to the time $T$ such that $\nh1{u_\al(0)}\leq C\al^{-\frac12}$ and $u_\al(0)$ converges weakly in $L^2$ to some $u_0$. Then there exists a weak Leray solution $u$ of the Navier-Stokes equations with initial velocity $u_0$ such that, after extraction of a sub-sequence,
\begin{equation*}
u_\al\rightharpoonup u
\text{ in } L^\infty_{loc}([0,T);L^2) \text{ weak* and in } 
L^2_{loc}([0,T);H^1) \text{ weak.}
\end{equation*}
\end{theorem}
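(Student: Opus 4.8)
The plan is to run the natural energy estimate for \eqref{maineq}, extract weak limits from the resulting uniform bounds, and then upgrade to the strong compactness needed to pass to the limit in the nonlinear terms. Write $v_\al=u_\al-\al\Delta u_\al$ for the filtered velocity, so that \eqref{maineq} reads $\partial_t v_\al-\nu\Delta u_\al+u_\al\cdot\nabla v_\al+\sum_j(v_\al)_j\nabla(u_\al)_j=-\nabla p$. I would first test against $u_\al$: the pressure drops out because $\dive u_\al=0$ and $u_\al$ vanishes on $\partial\Om$, and a short integration by parts (again using $\dive u_\al=0$) shows that the two nonlinear terms cancel \emph{exactly}. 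Since $\int\partial_t v_\al\cdot u_\al=\tfrac12\frac{d}{dt}(\nl2{u_\al}^2+\al\nl2{\nabla u_\al}^2)$ and $-\nu\int\Delta u_\al\cdot u_\al=\nu\nl2{\nabla u_\al}^2$, this yields the energy identity
\begin{equation*}
\nl2{u_\al(t)}^2+\al\nl2{\nabla u_\al(t)}^2+2\nu\int_0^t\nl2{\nabla u_\al}^2=\nl2{u_\al(0)}^2+\al\nl2{\nabla u_\al(0)}^2 .
\end{equation*}

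Both hypotheses of the theorem enter precisely here. The bound $\nh1{u_\al(0)}\le C\al^{-1/2}$ controls the last term on the right, $\al\nl2{\nabla u_\al(0)}^2\le C^2$, but it does \emph{not} control $\nl2{u_\al(0)}$; for that I would use that a weakly convergent sequence in $L^2$ is automatically bounded, so $\nl2{u_\al(0)}\le M$ uniformly. The right-hand side is then bounded independently of $\al$, and the identity gives uniform bounds: $u_\al$ is bounded in $L^\infty([0,T);L^2)\cap L^2([0,T);H^1)$ and $\sqrt\al\,\nabla u_\al$ is bounded in $L^\infty([0,T);L^2)$. In particular $\al\Delta u_\al\to0$ in the sense of distributions, since $|\al\langle\Delta u_\al,\phi\rangle|=\sqrt\al\,|\langle\sqrt\al\,\nabla u_\al,\nabla\phi\rangle|\le\sqrt\al\,C\nl2{\nabla\phi}\to0$ for smooth $\phi$, so $v_\al$ has the same weak limit as $u_\al$. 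By Banach--Alaoglu I extract a subsequence with $u_\al\rightharpoonup u$ weak-$*$ in $L^\infty([0,T);L^2)$ and weakly in $L^2([0,T);H^1)$.

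The hard part is the strong compactness required to treat the nonlinearity, which I would obtain by an Aubin--Lions argument. Testing \eqref{maineq} against a smooth divergence-free $\phi$ (again killing the pressure) gives $\frac{d}{dt}\langle v_\al,\phi\rangle=-\nu\langle\nabla u_\al,\nabla\phi\rangle-\langle u_\al\cdot\nabla v_\al+\sum_j(v_\al)_j\nabla(u_\al)_j,\phi\rangle$, and the whole difficulty is to bound the right-hand side uniformly in $\al$ by $C\|\phi\|_{H^s}$ with a time-$L^2$ constant. This is the one delicate computation: after integrating by parts to move derivatives off the high-order pieces of $v_\al$, every resulting term must be closed using only $\nl2{u_\al}\le M$, $\al\nl2{\nabla u_\al}^2\le C^2$ and $\nabla u_\al\in L^2_tL^2$, the terms carrying a factor $\al$ being exactly balanced by the available powers of $\sqrt\al$. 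This controls $\partial_t v_\al$ in $L^2([0,T);H^{-s})$; writing $v_\al=(1+\al A)u_\al$ with $A=-\PP\Delta$ the Stokes operator and using that $(1+\al A)^{-1}$ is a self-adjoint contraction commuting with $A$ (hence bounded on $H^{-s}$ uniformly in $\al$), I transfer this to a uniform bound on $\partial_t u_\al$ in $L^2([0,T);H^{-s})$. Together with the $L^2([0,T);H^1)$ bound, Aubin--Lions then gives $u_\al\to u$ strongly in $L^2_{loc}([0,T);L^2)$, and compactness in $C([0,T');H^{-s})$.

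With strong $L^2$ convergence in hand the conclusion is routine. In $u_\al\cdot\nabla u_\al$ the factor $u_\al$ converges strongly and $\nabla u_\al$ weakly, so the product converges to $u\cdot\nabla u$ distributionally, while all $\al$-terms (each carrying a surplus $\al$ or $\sqrt\al$) vanish in the limit; hence $u$ solves \eqref{NS} weakly with $\dive u=0$. Taking $\liminf$ in the energy identity, weak lower semicontinuity of the norms produces the Leray energy inequality, and the compactness in $C([0,T');H^{-s})$ together with the weak $L^2$ convergence of the data identifies the initial velocity as $u_0$, so $u$ is a weak Leray solution with the stated convergences. I expect the crux of the whole argument to be the uniform-in-$\al$ bound on the time derivative in the previous paragraph, where the singular operator $1-\al\Delta$ and the two extra derivatives hidden in $v_\al$ must be tamed by the exact powers of $\al$ furnished by the energy estimate.
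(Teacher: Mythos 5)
This theorem is not proved in the paper at all --- it is imported from \cite{iftimie_remarques_2002-1}, with the remark that the proof ``relies only on energy estimates and standard compactness arguments'' --- and your sketch is precisely that argument, correctly executed: your energy identity is the paper's own $H^1$ estimate \eqref{h1est}, the hypotheses enter exactly where you say (the uniform $L^2$ bound on the data from weak convergence via Banach--Steinhaus, and $\al\nl2{\nabla u_\al(0)}^2\leq C^2$ from $\nh1{u_\al(0)}\leq C\al^{-\frac12}$), and the remaining scheme (uniform bound on $\partial_t(u_\al-\al\PP\Delta u_\al)$ in $L^2(0,T;H^{-s})$ after integrating by parts so that every $\al$-term is paid for by $\sqrt\al\,\nabla u_\al\in L^\infty L^2$, transfer to $\partial_t u_\al$ via the uniform boundedness of $(1+\al A)^{-1}$, Aubin--Lions, strong--times--weak passage in the nonlinearity) is the standard compactness route the citation refers to. One small caution on your final step: since $u_\al(0)$ converges only weakly and $\al\nl2{\nabla u_\al(0)}^2$ is merely bounded (it need not vanish), lower semicontinuity in the energy identity yields the energy inequality with $\liminf_\al\bigl(\nl2{u_\al(0)}^2+\al\nl2{\nabla u_\al(0)}^2\bigr)$ on the right-hand side rather than $\nl2{u_0}^2$ itself, so the ``Leray'' property must be understood with this Leray-type inequality --- but that imprecision is inherent in the quoted statement, not a defect introduced by your sketch.
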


Our result about global existence of solutions reads as follows.
\begin{theorem}\label{globalthm}
There exists some small constant $\ep=\ep(\Om)$ depending only on $\Om$ such that if the initial velocity $u_0$ belongs to $H^3(\Om)$, is divergence free, vanishes at the boundary and verifies the following smallness conditions
\begin{gather}
\nl2{u_0}\nh1{u_0}\leq \ep^2\nu^2\label{small1}\\
\intertext{and}
\nl2{u_0}\nh2{u_0}\leq \ep^2\nu^2\al^{-\frac12},\qquad
\nh1{u_0}\nh2{u_0}\leq\ep^2\nu^2\al^{-1}\quad  \text{and}\quad \nh3{u_0}  \leq\ep\nu\al^{-\frac54},\label{small2}
\end{gather}
then the solution $u$ of \eqref{maineq}-\eqref{3} is global: $u\in L^\infty(\R_+;H^3(\Om))$.
\end{theorem}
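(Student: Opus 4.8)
The plan is to prove global existence by establishing $H^3$ estimates that are uniform in time, using a continuation (bootstrap) argument. Since the local well-posedness theory already guarantees a unique local $H^3$ solution, it suffices to show that the relevant norms cannot blow up: I would set up an a priori estimate showing that if the smallness conditions \eqref{small1}--\eqref{small2} hold at time $t=0$, then suitable quantities controlling $\nh3{u(t)}$ stay bounded (indeed, stay small in the appropriate $\al$-weighted sense) for all $t\geq0$, which by the standard continuation criterion forces the maximal time of existence to be infinite.

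\medskip

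The natural energy structure comes from testing \eqref{maineq} against $u$ itself. The key observation, standard for second grade fluids, is that the operator $u\mapsto u-\al\Delta u$ and the transport/stretching terms combine so that the basic energy identity controls the $\al$-weighted norm
\begin{equation*}
E_0(t)=\nl2{u(t)}^2+\al\nl2{\nabla u(t)}^2,
\end{equation*}
with the viscous term producing a coercive dissipation $\nu\nl2{\nabla u}^2$. I would first carry out this $L^2$-level estimate, using the Dirichlet boundary condition to kill boundary terms and the divergence-free condition to eliminate the pressure. The hypothesis \eqref{small1}, namely $\nl2{u_0}\nh1{u_0}\leq\ep^2\nu^2$, is exactly the scaling-critical smallness one expects to close the lowest-level nonlinear estimate and guarantee exponential decay of $E_0$; this is why the decisive smallness is measured only in the weak $H^1$-type norm.

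\medskip

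The crux is to propagate this smallness up to the $H^3$ level. I would differentiate \eqref{maineq} and test against the appropriate higher-order quantities (morally $\Delta^2 u$, or equivalently work with $\curl(u-\al\Delta u)$ and its derivatives, which is the quantity naturally transported by the flow) to produce a hierarchy of estimates for $\al$-weighted $H^2$ and $H^3$ norms. At each level the nonlinear terms $u\cdot\nabla(u-\al\Delta u)$ and $\sum_j(u-\al\Delta u)_j\nabla u_j$ must be bounded via Sobolev embeddings and interpolation so that they can be absorbed into the viscous dissipation; the three conditions in \eqref{small2}, with their precise powers of $\al^{-1/2}$, $\al^{-1}$ and $\al^{-5/4}$, are precisely tuned so that, after using the decay of the lower-order quantities already obtained, each higher-order nonlinearity is dominated. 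The main obstacle will be controlling the third-derivative term: because there is no uniform-in-$\al$ $H^3$ bound (as the introduction explains, the compatibility condition generically fails), one cannot hope for an $\al$-independent $H^3$ estimate, and the whole argument must instead track how $\nh3{u}$ is allowed to grow like a negative power of $\al$ while the physically meaningful weighted norms stay small. Getting the interpolation inequalities and the exact $\al$-weights to match the four hypotheses, so that the bootstrap closes and the solution norm remains inside the smallness region for all time, is the delicate part; everything else is a careful but routine sequence of energy estimates and Grönwall-type arguments.
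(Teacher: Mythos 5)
Your overall architecture does match the paper's: reduce to a priori bounds via local well-posedness and a continuation argument, run layered $\al$-weighted energy estimates, and work at top order with the transported quantity $\oma=\curl u-\al\Delta\curl u$. But the sketch omits the one idea the theorem actually turns on, and the step you describe in its place would fail. At the $H^3$ level the dangerous term is the vortex stretching
\begin{equation*}
\int_\Om\oma\cdot\nabla u\cdot\oma\leq\lip u\,\nl2\oma^2,
\end{equation*}
which is quadratic in the top-order energy itself and has no derivative to spare: it cannot be ``absorbed into the viscous dissipation via Sobolev embeddings and interpolation'' as you propose. With $\lip u$ only bounded by the bootstrap value $\frac{\nu}{2\al}$, a plain Gronwall argument gives growth like $e^{\nu t/(2\al)}$, which is fatal as $\al\to0$ and certainly yields no global bound. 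The paper's decisive observation is structural: since $\oma=\om-\al\Delta\om$, the viscous term can be rewritten as $-\nu\Delta\om=\frac{\nu}{\al}(\oma-\om)$, producing an $O(\nu/\al)$ \emph{damping} of $\oma$, namely
\begin{equation*}
\partial_t\nl2\oma^2+\frac{\nu}{2\al}\nl2\oma^2\leq\frac{2\nu}{\al}\nl2{\nabla u}^2,
\end{equation*}
and this damping dominates the stretching precisely because the bootstrap hypothesis $\lip u\leq\frac{\nu}{2\al}$ lives at the same $1/\al$ scale. Gronwall then gives the time-independent bound \eqref{h3diffeq2}, which is what makes the solution global rather than merely long-lived; without this cancellation-into-damping mechanism the bootstrap does not close at any positive time uniformly in $\al$.

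Two further gaps. First, you never specify the bootstrap quantities; the paper continues exactly two scale-calibrated conditions, $\lip u\leq\frac{\nu}{2\al}$ and $\nl3u\leq\ep_1\nu$ (relation \eqref{cond}): the first absorbs the genuinely second-grade term $\al\int_\Om\sum_j(\PP\Delta u)_j\nabla u_j\cdot\PP\Delta u$ at the $H^2$ level and controls the stretching at the $H^3$ level, while the second closes the convection term via $\int_\Om u\cdot\nabla u\cdot\PP\Delta u\leq C\nl3u\nl2{\PP\Delta u}^2$. Recovering the Lipschitz bound from the energy functional then requires the equivalence $F\simeq\nh1u+\al\nh3u$ (Lemma \ref{lemma1}), whose proof rests on the Foias--Temam inequality $\nl2{v}+\nl2{\curl v}\geq C\nh1{v}$ for divergence-free tangent fields together with Stokes regularity --- a genuine ingredient on a bounded domain with Dirichlet conditions, not covered by ``interpolation''. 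Second, your reading of \eqref{small1} is off: the $L^2$-level energy identity for \eqref{maineq} is exact (the nonlinear terms cancel), so no smallness is needed there and no exponential decay of $E_0$ is used anywhere; \eqref{small1} instead enters through $\nl3{u}\leq C\nl2{u}^{\frac12}\nh1{u}^{\frac12}$ and the $H^1$/$H^2$ energy bounds to keep the condition $\nl3u\leq\ep_1\nu$ alive along the evolution.
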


The important thing to observe here is that the smallness conditions in \eqref{small2} involve only negative powers of $\al$, so they disappear when $\al\to0$. Only the smallness condition \eqref{small1} subsists, and this is in accordance with the classical global well-posedness results for small data for the Navier-Stokes equations. Note that condition \eqref{small1} is scaling invariant and that it implies the smallness of the $H^{\frac12}$ norm of $u_0$ by the interpolation inequality $\nh{\frac12}{u_0}\leq\nl2{u_0}^{\frac12}\nh1{u_0}^{\frac12}$. This in turn gives the existence of a unique global solution of \eqref{2}-\eqref{NS} by the celebrated result of Fujita and Kato  \cite{fujita_navier-stokes_1964}. We have the following immediate corollary.
\begin{corollary}
Suppose that $u_0\in H^3(\Om)$ is divergence free and vanishes on the boundary. There exists some small constant $\ep=\ep(\Om)$ depending only on $\Om$ and some constant $\al_0>0$ such that if
 \begin{equation*}
\nl2{u_0}\nh1{u_0}\leq\ep^2\nu^2\qquad\text{and}\qquad\al\leq\al_0   
 \end{equation*}
then the $H^3$ solution of \eqref{maineq}-\eqref{3} is globally defined.
\end{corollary}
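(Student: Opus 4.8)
The plan is to deduce the corollary immediately from Theorem \ref{globalthm}, the whole point being that the three conditions gathered in \eqref{small2} are automatically satisfied once $\al$ is taken small enough: their right-hand sides carry strictly negative powers of $\al$, while their left-hand sides are fixed finite numbers determined by the (now fixed) datum $u_0$.

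More precisely, I would fix $u_0\in H^3(\Om)$ divergence free and vanishing on the boundary, and let $\ep=\ep(\Om)$ be the constant furnished by Theorem \ref{globalthm}. The four quantities $\nl2{u_0}$, $\nh1{u_0}$, $\nh2{u_0}$ and $\nh3{u_0}$ are finite and do not depend on $\al$. The first hypothesis \eqref{small1}, namely $\nl2{u_0}\nh1{u_0}\leq\ep^2\nu^2$, is exactly the assumption of the corollary, so it holds outright; it remains only to secure \eqref{small2}.

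For this I would set
\begin{equation*}
\al_0=\min\Bigl\{1,\ \frac{\ep^4\nu^4}{\nl2{u_0}^2\nh2{u_0}^2},\ \frac{\ep^2\nu^2}{\nh1{u_0}\nh2{u_0}},\ \Bigl(\frac{\ep\nu}{\nh3{u_0}}\Bigr)^{\!4/5}\Bigr\},
\end{equation*}
with the convention that a term is omitted (treated as $+\infty$) whenever the corresponding norm of $u_0$ vanishes. Solving each inequality of \eqref{small2} for $\al$ shows that for every $\al\leq\al_0$ one has simultaneously $\nl2{u_0}\nh2{u_0}\leq\ep^2\nu^2\al^{-\frac12}$, $\nh1{u_0}\nh2{u_0}\leq\ep^2\nu^2\al^{-1}$ and $\nh3{u_0}\leq\ep\nu\al^{-\frac54}$; that is, \eqref{small2} holds. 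Together with \eqref{small1} this places $u_0$ exactly in the hypotheses of Theorem \ref{globalthm}, which then yields $u\in L^\infty(\R_+;H^3(\Om))$, so the $H^3$ solution is global.

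There is no genuine obstacle here: the entire content of the corollary is the observation already emphasised after Theorem \ref{globalthm}, that the auxiliary conditions \eqref{small2} degenerate into no condition at all as $\al\to0$, leaving only the scaling-invariant and $\al$-independent requirement \eqref{small1}. The only point worth stating cleanly is the dependence of the constants: $\ep$ depends solely on $\Om$, whereas $\al_0$ depends on $u_0$ (through its $H^2$ and $H^3$ norms) and on $\nu$, which is precisely what the statement asserts.
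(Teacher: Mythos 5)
Your proposal is correct and follows exactly the argument the paper intends: the corollary is stated as ``immediate'' from Theorem \ref{globalthm} precisely because the conditions in \eqref{small2} carry negative powers of $\al$ while their left-hand sides are fixed once $u_0$ is fixed, so they hold for all $\al\leq\al_0$ with your explicit choice of $\al_0$ (including the harmless $\min$ with $1$, consistent with the paper's standing assumption $\al\leq1$). Your bookkeeping of the exponents and of the dependence of $\al_0$ on $u_0$ and $\nu$ is accurate; nothing is missing.
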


Combining this corollary with Theorem \ref{teorema} we immediately obtain the global convergence of solutions of \eqref{maineq} towards solutions of the Navier-Stokes equations if we assume that the initial velocity $u_0$ does not depend on $\al$, belongs to $H^3$ and is small in $H^1$. This is just an example of convergence result, more general results can be obtained by allowing the initial data to depend on $\al$ and combining Theorems \ref{teorema} and \ref{globalthm}.  

Finally, let us conclude our results on global solutions with the observation that the smallness conditions \eqref{small1} and \eqref{small2}
are implied by the simpler but less general condition:
\begin{equation*}
 \nh1{u_0}\leq\ep\nu,\qquad   \nh2{u_0}\leq\ep\nu\al^{-\frac12}\qquad  \text{and}\qquad \nh3{u_0}  \leq\ep\nu\al^{-\frac54}.
\end{equation*}
Indeed, if the above relation holds true one can readily obtain \eqref{small1} and \eqref{small2} simply by estimating  $\nl2{u_0}\leq\nh1{u_0}$.

\medskip

We state now our result on the ``uniform local existence of solutions''.
\begin{theorem}\label{local}
There exist two constants $\ep=\ep(\Om)$ and $K=K(\Om)$ depending only on $\Omega$ such that if $u_0\in H^3(\Om)$ is divergence free and vanishes on the boundary and 
\begin{equation}\label{hyplocal}
  \nh1{u_0}\leq \ep\nu\al^{-\frac14}\qquad\text{and}\qquad \nh3{u_0}\leq\ep\nu\al^{-\frac54}
\end{equation}
then there exists a $H^3$ solution $u\in L^\infty(0,T;H^3(\Om))$ of \eqref{maineq}-\eqref{3} which is defined at least up to the time
\begin{equation*}
  T=\frac{\nu^3}{K(\nh1{u_0}+\sqrt\al\nh2{u_0})^4}.
\end{equation*}
\end{theorem}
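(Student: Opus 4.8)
The plan is to run a continuation argument on top of the classical local $H^3$ existence theory recalled in the introduction: under the hypotheses there is a unique $H^3$ solution on some maximal interval $[0,T^*)$, and it suffices to produce an a priori bound showing that the controlling norm cannot blow up before the asserted time, whence $T^*\geq T$. Concretely I would aim at a Riccati-type inequality $\frac{d}{dt}M\leq \frac{C}{\nu^3}M^3$ for $M(t)=\nh1{u(t)}^2+\al\nh2{u(t)}^2$, since $M(0)\simeq(\nh1{u_0}+\sqrt\al\nh2{u_0})^2$ and integrating such an inequality gives control of $M$ on exactly $[0,T)$ with $T\simeq\nu^3/M(0)^2$, matching the stated time. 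The starting point is the basic energy identity: testing \eqref{maineq} against $u$ and using $\dive u=0$ with $u|_{\partial\Om}=0$, the two nonlinear terms $u\cdot\nabla(u-\al\Delta u)$ and $\sum_j(u-\al\Delta u)_j\nabla u_j$ cancel, yielding $\frac12\frac{d}{dt}(\nl2{u}^2+\al\nl2{\nabla u}^2)+\nu\nl2{\nabla u}^2=0$, which controls $\nl2{u}$ and $\sqrt\al\nl2{\nabla u}$ and supplies time-integrated dissipation for free.

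For the higher-order estimate I would work with $q=\curl(u-\al\Delta u)=\om-\al\Delta\om$, where $\om=\curl u$, since this quantity simultaneously controls the $H^3$ norm of $u$ and obeys a favourable equation. Taking the curl of \eqref{maineq} and using the algebraic identity $\curl\big(u\cdot\nabla w+\sum_j w_j\nabla u_j\big)=u\cdot\nabla\,\curl w-(\curl w)\cdot\nabla u$ with $w=u-\al\Delta u$, one gets the transport--stretching--diffusion equation $\partial_t q+u\cdot\nabla q-q\cdot\nabla u=\nu\Delta\om$. Testing against $q$: the transport term drops out by $\dive u=0$ and $u\cdot n=0$ on $\partial\Om$; the stretching term is bounded by $\lip{u}\,\nl2{q}^2$; and the diffusion term, after integration by parts, produces the dissipation $-\nu(\nl2{\nabla\om}^2+\al\nl2{\Delta\om}^2)$ up to boundary contributions.

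It then remains to convert $\nl2{q}$ and $\lip{u}$ into $\al$-weighted Sobolev norms of $u$ and to assemble the pieces. For this I would invoke elliptic regularity for $(1-\al\Delta)$ with Dirichlet data, tracking the powers of $\al$ explicitly: from $\curl w=q$, $\dive w=0$, and $(1-\al\Delta)u=w$ with $u|_{\partial\Om}=0$, one obtains $\nh1{u}+\sqrt\al\nh2{u}+\al\nh3{u}\lesssim\nl2{q}+\text{l.o.t.}$, hence $\nl2{q}^2\simeq\nh1{u}^2+\al\nh2{u}^2+\al^2\nh3{u}^2$, and via 3D Sobolev embedding and interpolation $\lip{u}\lesssim\nh3{u}$. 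The smallness hypotheses \eqref{hyplocal} are calibrated precisely so that the $H^3$-level contributions, which carry positive powers of $\al$, can be absorbed into the viscous dissipation by Young's inequality, leaving a closed inequality for $M$; the bookkeeping must be fine enough that the surviving production term is cubic in $M$, carries the factor $\nu^{-3}$, and involves only the lower norm $\nh1{u}+\sqrt\al\nh2{u}$ rather than the full $\nl2{q}$.

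I expect the main obstacle to be twofold and intertwined. First, the boundary terms generated by the integrations by parts in the $q$-estimate: under Dirichlet conditions $u=0$ on $\partial\Om$ but $\om=\curl u\neq0$ there, so both the curl computation and the dissipation integral produce boundary integrals that are not obviously signed or controlled and must be handled with trace and elliptic boundary estimates — this is exactly what makes the Dirichlet case genuinely harder than the Navier-slip case. Second, the precise $\al$-accounting: one must check that every dangerous term is dominated, after using \eqref{hyplocal} and Young's inequality, by the dissipation, so that the final inequality depends on the data only through $\nh1{u_0}+\sqrt\al\nh2{u_0}$. Making these two ingredients fit together is the heart of the argument; once $\frac{d}{dt}M\leq \frac{C}{\nu^3}M^3$ is in hand, integration and the continuation argument conclude.
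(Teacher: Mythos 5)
Your skeleton — continuation argument on top of classical $H^3$ local theory, the quantity $q=\oma=\om-\al\Delta\om$, and the Riccati time scale $\nu^3/(\nh1{u_0}+\sqrt\al\nh2{u_0})^4$ — matches the paper, but there is a genuine gap at exactly the point you flag as the ``main obstacle'' and then leave open: the viscous term in the $\oma$-equation. You propose to integrate by parts in $\nu\int_\Om\Delta\om\cdot\oma$ to produce the dissipation $-\nu(\nl2{\nabla\om}^2+\al\nl2{\Delta\om}^2)$ ``up to boundary contributions'' to be handled by trace and elliptic estimates. Under Dirichlet conditions neither $\om$ nor $\partial_n\om$ vanishes on $\partial\Om$, and there is no available bound, uniform in $\al$ at this regularity level, for boundary terms of the type $\nu\int_{\partial\Om}\partial_n\om\cdot\om$; acknowledging them does not close the argument. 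The paper's key move is to \emph{avoid integration by parts entirely}: since $\Delta\om=\al^{-1}(\om-\oma)$, one writes
\begin{equation*}
-\nu\int_\Om\Delta\om\cdot\oma=\frac{3\nu}{4\al}\nl2{\oma}^2+\frac\nu\al\bigl\|\tfrac{\oma}2-\om\bigr\|_{L^2}^2-\frac\nu\al\nl2{\om}^2,
\end{equation*}
a purely algebraic identity with no boundary terms. After paying $\frac\nu{2\al}\nl2{\oma}^2$ for the stretching term (using the bootstrap hypothesis $\lip{u}\leq\nu/(2\al)$), a damping at rate $\nu/(2\al)$ survives, and Gronwall yields $\nl2{\oma(t)}^2\leq\nl2{\oma(0)}^2+4\sup_{s\leq t}\nl2{\nabla u(s)}^2$ — no viscous dissipation at top order is needed at all.

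There is a second structural problem: you want the cubic inequality $\frac{d}{dt}M\leq CM^3/\nu^3$ for $M=\nh1{u}^2+\al\nh2{u}^2$ to emerge from the $q$-estimate plus elliptic conversion, but the stretching term there is $\lip{u}\,\nl2{q}^2$ and $\lip{u}$ is \emph{not} controlled by the lower norms in $M$, so the ``bookkeeping'' you defer cannot be carried out along that route. In the paper the Riccati structure comes from a \emph{separate} $H^2$ energy estimate, testing the projected equation against $-\PP\Delta u$: the dangerous term $-\al\int_\Om\sum_j(\PP\Delta u)_j\nabla u_j\cdot\PP\Delta u$ is absorbed precisely by the bootstrap assumption $\lip{u}\leq\nu/(2\al)$, while the convective term produces $\frac{C_1}{\nu^3}\nl2{\nabla u}^6$; this is the sole source of the time scale $T\sim\nu^3/M^4$. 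The $\oma$-estimate serves only to \emph{maintain} the Lipschitz bound, via the equivalence $\nl2{\nabla u}+\sqrt\al\nl2{\PP\Delta u}+\nl2{\oma}\simeq\nh1{u}+\al\nh3{u}$ (Lemma \ref{lemma1}), whose proof needs the $H^2$ quantity $\sqrt\al\nl2{\PP\Delta u}$ together with the Foias--Temam inequality $\nl2{v}+\nl2{\curl v}\geq C\nh1{v}$ applied to the divergence-free, tangent field $v=\PP\Delta u$ and Stokes regularity. Your claimed elliptic estimate for ``$(1-\al\Delta)$ with Dirichlet data'' does not apply here, since neither $\om$ nor $w=u-\al\Delta u$ satisfies any boundary condition, so $\nl2{q}$ alone cannot control $\al\nh3{u}$. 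In short, both difficulties you honestly flag are resolved in the paper by mechanisms (the no-integration-by-parts damping identity, and the two-tier bootstrap with a separate $H^2$ Riccati) that are absent from your plan, and without them the proof does not close.
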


Let us emphasize again that in the above theorem the time of existence of the solution depends on the $H^2$ norm of the initial data, but not on the $H^3$ norm or the $W^{2,p}$, $p>3$, norm as in the previous results on local existence of solutions. 

Another important thing to note is that both the smallness conditions for $\nh1{u_0}$ and $\nh3{u_0}$ involve negative powers of $\al$. So they disappear when taking the limit $\al\to0$. More precisely, we have the following corollary.
\begin{corollary}
 Suppose that $u_0\in H^3(\Om)$ is divergence free and vanishes on the boundary. Let $T_\al$ be the maximal time of existence of the $H^3$ solution $u$ of \eqref{maineq}-\eqref{3}, \textit{i.e.}
$$u\in L^\infty_{loc}([0,T_\al);H^3(\Om))\setminus L^\infty(0,T_\al;H^3(\Om)).$$ 
Then
 \begin{equation*}
\liminf_{\al\to0}T_\al\geq \frac{\nu^3}{K\nh1{u_0}^4}.
 \end{equation*}
\end{corollary}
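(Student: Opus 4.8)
The plan is to deduce the corollary directly from Theorem \ref{local} by fixing $u_0$ and letting $\al$ tend to zero. First I would fix the divergence-free field $u_0\in H^3(\Om)$ vanishing on the boundary; its norms $\nh1{u_0}$, $\nh2{u_0}$ and $\nh3{u_0}$ are then finite constants that do not depend on $\al$.

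The key observation is that the smallness hypotheses \eqref{hyplocal} required by Theorem \ref{local} carry only negative powers of $\al$ on their right-hand sides. Since the left-hand sides $\nh1{u_0}$ and $\nh3{u_0}$ are fixed while $\ep\nu\al^{-\frac14}\to\infty$ and $\ep\nu\al^{-\frac54}\to\infty$ as $\al\to0$, there exists $\al_0=\al_0(u_0,\nu,\Om)>0$ such that both conditions in \eqref{hyplocal} hold for every $\al\leq\al_0$. Thus, for $\al$ small enough, Theorem \ref{local} applies to our fixed $u_0$.

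For each such $\al$, Theorem \ref{local} provides an $H^3$ solution defined at least up to the time $T=\nu^3/\bigl(K(\nh1{u_0}+\sqrt\al\,\nh2{u_0})^4\bigr)$. By the very definition of the maximal time of existence $T_\al$, this forces $T_\al\geq \nu^3/\bigl(K(\nh1{u_0}+\sqrt\al\,\nh2{u_0})^4\bigr)$ for all $\al\leq\al_0$. Finally I would pass to the limit: since $\nh2{u_0}$ is a fixed constant, $\sqrt\al\,\nh2{u_0}\to0$ as $\al\to0$, so the denominator converges to $K\nh1{u_0}^4$ and the lower bound converges to $\nu^3/(K\nh1{u_0}^4)$. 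Taking the $\liminf$ as $\al\to0$ in the inequality above then yields the claimed bound.

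I do not expect a genuine obstacle here: the entire analytic content is already contained in Theorem \ref{local}, and the corollary is a soft consequence. The only points to verify are the harmless facts that the constraints \eqref{hyplocal} are eventually satisfied for fixed data (because their right-hand sides blow up) and that the correction term $\sqrt\al\,\nh2{u_0}$ disappears in the limit. If anything warrants a word of care, it is merely the bookkeeping of $\al_0$ and the monotone behaviour of $T$ in $\al$, neither of which is delicate.
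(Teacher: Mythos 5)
Your argument is correct and is exactly the one the paper intends: the corollary is stated without proof as an immediate consequence of Theorem \ref{local}, precisely because the right-hand sides of \eqref{hyplocal} blow up as $\al\to0$ (so the hypotheses are eventually satisfied for fixed $u_0$) and the term $\sqrt\al\,\nh2{u_0}$ in the guaranteed existence time vanishes in the limit. Your bookkeeping of $\al_0$ and the passage to the $\liminf$ match the paper's implicit reasoning, so there is nothing to add.
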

 
Combining this corollary with Theorem \ref{teorema} yields, under the assumptions of the corollary, the convergence of the solutions of \eqref{maineq} towards the solutions of the Navier-Stokes equations up to the time $T= \frac{\nu^3}{K\nh1{u_0}^4}$.

\section{Global existence of solutions}
\label{globalsect}

In this section we  show Theorem \ref{globalthm}. In the sequel, we will denote by $C$ a generic constant which depends only on $\Om$ and whose value can change from one line to another. We will use the standard notation for the $H^m$ norms
\begin{equation*}
  \|u\|_{H^m}=\Bigl(\sum_{\beta\in\N^3,|\beta|\leq m}\int_\Om|\partial^\beta u(x)|^2\,dx\Bigr)^{\frac12}.
\end{equation*}

We denote by $\PP$ the Leray projector, \textit{i.e.} the $L^2$ orthogonal projector on the subspace of divergence free and tangent to the boundary vector fields.

\medskip

We will prove that there exists  a sufficiently small constant $\ep_1=\ep_1(\Om)$ depending only on $\Om$ such that if 
\begin{equation}\label{cond}
\lip u\leq\frac\nu{2\al}\qquad\text{and}\qquad \nl3u\leq\ep_1 \nu
\end{equation}
on some time interval $[0,T]$, then the two quantities above are even smaller:
\begin{equation}\label{condimpl}
\lip u\leq\frac\nu{4\al}\qquad\text{and}\qquad \nl3u\leq\frac{\ep_1 \nu}2  
\end{equation}
on the same time interval $[0,T]$. By time continuity, this implies that \eqref{cond} never breaks down if it is verified at the initial time and all the estimates that follow hold true globally in time. Let us now observe that \eqref{cond} holds true at the initial time if $\ep$ and $\ep_1$ are sufficiently small independently of $\al$ and $\nu$. We use the Gagliardo-Nirenberg inequality and \eqref{small1} to write
\begin{equation*}
\nl3{u_0}\leq C\nl2{u_0}^{\frac12}\nh1{u_0}^{\frac12} \leq C\ep\nu< \ep_1\nu 
\end{equation*}
provided that $C\ep<\ep_1$. This is a condition that we assume in what follows. Next, we use an interpolation inequality and relation \eqref{small2} to estimate
\begin{equation*}
\nh1{u_0}\leq \nl2{u_0}^{\frac12}\nh2{u_0}^{\frac12}\leq \ep\nu\al^{-\frac14}.  
\end{equation*}
From the Gagliardo-Nirenberg inequality and \eqref{small2}  we deduce that
\begin{equation*}
  \lip{u_0}\leq C\nh1{u_0}^{\frac14}\nh3{u_0}^{\frac34}\leq C\frac{\ep\nu}\al< \frac\nu{2\al}
\end{equation*}
provided that $\ep<\frac1{2C}$ which is another condition that we will assume in the sequel. We conclude from the above estimates that \eqref{cond} holds true at the initial time with strict inequality provided that $C\ep<\ep_1$ and $\ep<\frac1{2C}$.

\medskip

We assume in the following that \eqref{cond} holds true. Because we need a precise dependence on $\al$ of the constants, we have to split the estimates for $u$ in three different parts: $H^1$ estimates, $H^2$ estimates and $H^3$ estimates. The estimates below should be viewed as \textit{a priori} estimates. They can be turned into rigorous estimates via a standard Galerkin approximation procedure.

\subsection*{$H^1$ estimates.} We multiply \eqref{maineq} by $u$ and integrate in space to obtain
\begin{equation*}
  \frac12 \partial_t(\nl2u^2+\al\nl2{\nabla u}^2)+\nu\nl2{\nabla u}^2=0
\end{equation*}
so, after an integration in time,
\begin{equation}\label{h1est}
\nl2{u(t)}^2+\al\nl2{\nabla u(t)}^2+2\nu\int_0^t\nl2{\nabla u(s)}^2\,ds= \nl2{u_0}^2+\al\nl2{\nabla u_0}^2. 
\end{equation}

\subsection*{$H^2$ estimates.} To perform $H^2$ estimates on \eqref{maineq} we need to write it under a different form. First we remove the pressure by applying the Leray projector $\PP$. We obtain the following equivalent equation:
\begin{equation*}
\partial_t (u-\al\PP\Delta u)-\nu\PP\Delta u +\PP\bigl[u\cdot\nabla(u-\al\Delta u)\bigr]+\PP\bigl[\sum_j(u-\al\Delta u)_j\nabla u_j\bigr]=0.  
\end{equation*}

We observe first that
\begin{equation*}
 \PP \bigl(\sum_j u_j\nabla u_j\bigr)=\frac12\PP\nabla(|u|^2)=0.
\end{equation*}

Next, we have that 
\begin{equation*}
  u\cdot\nabla\nabla q+\sum_j\partial_j q\nabla u_j=\nabla(u\cdot\nabla q)
\end{equation*}
is a gradient so
\begin{equation*}
  \PP\Bigl( u\cdot\nabla\nabla q+\sum_j\partial_j q\nabla u_j\Bigr)=0
\end{equation*}
for any $q$. Recalling that $\PP\Delta u-\Delta u$ is a gradient, we infer that  \eqref{maineq} can be written under the following equivalent form:
\begin{equation*}
\partial_t (u-\al\PP\Delta u)-\nu\PP\Delta u +\PP\bigl[u\cdot\nabla(u-\al\PP\Delta u)\bigr]-\al\PP\bigl[\sum_j(\PP\Delta u)_j\nabla u_j\bigr]=0.  
\end{equation*}

We multiply the above equation by $-\PP\Delta u$ and integrate in space. We recall that $\PP$ is a self-adjoint projector. We have that $\PP^2=\PP$ and $\PP u=u$ because $u$ is divergence free and tangent to the boundary. We infer that
\begin{align*}
\frac12\partial_t(\nl2{\nabla u}^2+&\al\nl2{\PP\Delta u}^2)+\nu\nl2{\PP\Delta u}^2\\
&=\int_\Om \PP\bigl[u\cdot\nabla(u-\al\PP\Delta u)\bigr]\cdot \PP\Delta u 
-\al\int_\Om \PP\bigl[\sum_j(\PP\Delta u)_j\nabla u_j\bigr] \cdot \PP\Delta u \\
&=\int_\Om u\cdot\nabla(u-\al\PP\Delta u)\cdot \PP\Delta u 
-\al\int_\Om \sum_j(\PP\Delta u)_j\nabla u_j \cdot \PP\Delta u. 
\end{align*}
Using  the cancellation
\begin{equation*}
 \int_\Om u\cdot\nabla\PP\Delta u\cdot \PP\Delta u=0 
\end{equation*}
we infer that
\begin{equation}\label{h2diffeq}
\frac12\partial_t(\nl2{\nabla u}^2+\al\nl2{\PP\Delta u}^2)+\nu\nl2{\PP\Delta u}^2  
=\int_\Om u\cdot\nabla u\cdot \PP\Delta u 
-\al\int_\Om \sum_j(\PP\Delta u)_j\nabla u_j \cdot \PP\Delta u. 
\end{equation}

We bound
\begin{equation}\label{h2diffeq2}
- \al\int_\Om \sum_j(\PP\Delta u)_j\nabla u_j \cdot \PP\Delta u
\leq \al\nl2{\PP\Delta u}^2\lip u
\leq \frac\nu2 \nl2{\PP\Delta u}^2
\end{equation}
and
\begin{align*}
\int_\Om u\cdot\nabla u\cdot \PP\Delta u 
&\leq \nl3u\nl6{\nabla u}\nl2{\PP\Delta u}\\
&\leq C\nl3u\nh1{\nabla u} \nl2{\PP\Delta u} \\
&\leq C\nl3u\nl2{\PP\Delta u}^2\\
&\leq C\ep_1\nu  \nl2{\PP\Delta u}^2\\
&\leq \frac\nu4\nl2{\PP\Delta u}^2
\end{align*}
provided that $\ep_1\leq1/4C$ which is the only condition we will impose on $\ep_1$. We used above the Sobolev embedding $H^1\subset L^6$ and the classical regularity result for the stationary Stokes operator which claims that $\nl2{\PP\Delta u}\simeq\nh2u$.

We infer from the previous relations that
\begin{equation*}
\partial_t(\nl2{\nabla u}^2+\al\nl2{\PP\Delta u}^2)+\frac\nu2\nl2{\PP\Delta u}^2  \leq0
\end{equation*}
so, after an integration in time,
\begin{equation}\label{h2est}
\nl2{\nabla u(t)}^2+\al\nl2{\PP\Delta u(t)}^2+\frac\nu2\int_0^t\nl2{\PP\Delta u(s)}^2\,ds  \leq \nl2{\nabla u_0}^2+\al\nl2{\PP\Delta u_0}^2.  
\end{equation}

\subsection*{$H^3$ estimates.}
Let us introduce the notation
\begin{equation*}
\om=\curl u,\qquad \oma=\om-\al\Delta \om.  
\end{equation*}

To perform the $H^3$ estimates, we apply the curl operator to \eqref{maineq}. We obtain the following PDE:
\begin{equation*}
\partial_t \oma-\nu\Delta\om+u\cdot\nabla\oma-\oma\cdot\nabla u=0.  
\end{equation*}

We multiply by $\oma$ and integrate in space to obtain
\begin{align*}
  \frac12\partial_t\nl2\oma^2-\nu\int_\Om\Delta\om\cdot\oma
&=\int_\Om\oma\cdot\nabla u\cdot\oma\\
&\leq\nl2\oma^2\lip u\\
&\leq\frac\nu{2\al}\nl2\oma^2.\notag
\end{align*}
Next, since $\oma=\om-\al\Delta\om$ we can write
\begin{equation*}
-\nu\int_\Om\Delta\om\cdot\oma
= -\frac\nu\al\int_\Om(\om-\oma)\cdot\oma
=\frac{3\nu}{4\al}\nl2\oma^2+\frac\nu\al\bigl\|\frac\oma2-\om\bigr\|^2_{L^2}-\frac\nu\al\nl2\om^2.
\end{equation*}

We infer that 
\begin{equation*}
\frac12\partial_t\nl2\oma^2 +\frac{3\nu}{4\al}\nl2\oma^2+\frac\nu\al\bigl\|\frac\oma2-\om\bigr\|^2_{L^2}-\frac\nu\al\nl2\om^2 \leq\frac\nu{2\al}\nl2\oma^2
\end{equation*}
so
\begin{equation*}
\partial_t\nl2\oma^2+\frac\nu{2\al}\nl2\oma^2\leq \frac{2\nu}\al\nl2\om^2= \frac{2\nu}\al\nl2{\nabla u}^2.  
\end{equation*}

The Gronwall inequality implies that
\begin{align}
\nl2{\oma(t)}^2
&\leq \nl2{\oma(0)}^2e^{-t\frac\nu{2\al}}
+\frac{2\nu}\al\int_0^te^{(s-t)\frac\nu{2\al}} \nl2{\nabla u(s)}^2\,ds\notag\\
&\leq \nl2{\oma(0)}^2e^{-t\frac\nu{2\al}}
+\frac{2\nu}\al\sup_{s\in[0,t]}\nl2{\nabla u(s)}^2\int_0^te^{(s-t)\frac\nu{2\al}} \,ds\notag\\ 
&\leq \nl2{\oma(0)}^2
+4\sup_{s\in[0,t]}\nl2{\nabla u(s)}^2\label{h3diffeq2}\\
&\leq \nl2{\oma(0)}^2
+4\nl2{\nabla u_0}^2+4\al\nl2{\PP\Delta u_0}^2\notag
\end{align}
where we used the bound for $\nl2{\nabla u}$ given in \eqref{h2est}. Adding this relation to \eqref{h2est} implies that 
\begin{equation}\label{finalbound}
F(t)\leq CF(0)  
\end{equation}
where
\begin{equation*}
  F(t)=\nl2{\nabla u(t)}+\sqrt\al\nl2{\PP\Delta u(t)}+\nl2{\oma(t)}.
\end{equation*}

The following lemma holds true:
\begin{lemma}\label{lemma1}
We have that
\begin{equation*}
F(t)\simeq \nh1{u(t)}+\al\nh3{u(t)}  
\end{equation*}
with constants depending only on $\Om$.
\end{lemma}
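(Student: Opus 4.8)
The plan is to prove the two inequalities making up the equivalence separately, in each case bounding the quantity of interest by the other one. The direction $F(t)\leq C(\nh1{u}+\al\nh3{u})$ is the easier one, and I would treat the three terms of $F$ in turn. For $\nl2{\nabla u}$ I simply use $\nl2{\nabla u}\leq\nh1{u}$. For the middle term I combine the stationary Stokes regularity $\nl2{\PP\Delta u}\simeq\nh2{u}$ with the interpolation inequality $\nh2{u}\leq C\nh1{u}^{\frac12}\nh3{u}^{\frac12}$ and Young's inequality, which give $\sqrt\al\nl2{\PP\Delta u}\leq C\nh1{u}^{\frac12}(\al\nh3{u})^{\frac12}\leq C(\nh1{u}+\al\nh3{u})$. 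For the last term, since $\oma=\om-\al\Delta\om$ with $\om=\curl u$, I bound $\nl2{\oma}\leq\nl2{\om}+\al\nl2{\Delta\om}\leq C\nh1{u}+C\al\nh3{u}$, because $\om$ and $\Delta\om$ are respectively first- and third-order derivatives of $u$.

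For the reverse bound $\nh1{u}+\al\nh3{u}\leq CF(t)$, the contribution $\nh1{u}$ is immediate: since $u$ is divergence free and vanishes on the boundary, Poincaré gives $\nh1{u}\leq C\nl2{\nabla u}\leq CF(t)$, and in particular $\nl2{\om}\leq C\nl2{\nabla u}\leq CF(t)$. The whole difficulty is to control $\al\nh3{u}$. The naive idea of reading $\oma=\om-\al\Delta\om$ as an elliptic equation for $\om$ and applying elliptic regularity fails, because $\om=\curl u$ satisfies no usable boundary condition; this is the main obstacle.

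To circumvent it, I would first observe that $\al\Delta\om=\om-\oma$, so $\al\nl2{\Delta\om}\leq\nl2{\om}+\nl2{\oma}\leq CF(t)$ by the bound just recalled. Then I transfer this scalar information to the velocity through the field $\PP\Delta u$, which is divergence free, tangent to the boundary, and satisfies $\curl\PP\Delta u=\curl\Delta u=\Delta\om$, since the Leray projector only removes a gradient, which is curl free. The div-curl (Gaffney) elliptic estimate for fields tangent to the boundary then gives $\|\PP\Delta u\|_{H^1}\leq C\nl2{\Delta\om}+C\nl2{\PP\Delta u}$, and the higher-order stationary Stokes regularity gives $\nh3{u}\leq C\|\PP\Delta u\|_{H^1}+C\nh1{u}$. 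Multiplying by $\al$ and combining, $\al\nh3{u}\leq C\al\nl2{\Delta\om}+C\al\nl2{\PP\Delta u}+C\al\nh1{u}$, where $C\al\nl2{\Delta\om}\leq CF(t)$, the middle term equals $C\sqrt\al\,(\sqrt\al\nl2{\PP\Delta u})\leq C\sqrt\al\,F(t)\leq CF(t)$, and $C\al\nh1{u}\leq C\nh1{u}\leq CF(t)$, all using $\al\leq1$ and $\sqrt\al\nl2{\PP\Delta u}\leq F(t)$. This yields $\al\nh3{u}\leq CF(t)$ and closes the estimate.

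The crux is thus the extraction of the elliptic gain of derivatives, namely passing from the scalar bound $\al\nl2{\Delta\om}\leq CF(t)$ to the full $\al\nh3{u}\leq CF(t)$. The key point making this possible is that $\PP\Delta u$ carries $\Delta\om$ as its curl while enjoying the boundary tangency required by the div-curl estimate, so one never has to impose a boundary condition on $\om$ itself. The remaining verifications, in particular that the constants in the Gaffney and Stokes estimates depend only on $\Om$, are standard and I would only indicate the references.
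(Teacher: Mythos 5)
Your proof is correct and follows essentially the same route as the paper: the forward bound via interpolation of $\nh2u$, and the reverse bound via the identity $\curl\PP\Delta u=\curl\Delta u=\Delta\om$, the div-curl estimate of Foias--Temam for divergence-free fields tangent to the boundary (your ``Gaffney'' estimate), and the Stokes regularity $\nh3u\leq C\nh1{\PP\Delta u}$, with $\al\leq1$ absorbing the powers of $\al$. The only difference is presentational: the paper lower-bounds $F$ directly by $\al(\nl2{\PP\Delta u}+\nl2{\curl\PP\Delta u})$, whereas you run the same chain of inequalities upward from $\al\nh3u$.
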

\begin{proof}
Because $\PP$ is bounded in $L^2$ and $\oma=\curl u-\al\Delta\curl u$ we immediately get the bound
\begin{equation*}
F\leq C(\nh1{u}+\sqrt\al\nh2{u}+\al\nh3{u}).   
\end{equation*}
From the interpolation inequality $\nh2u\leq \nh1u^{\frac12}\nh3u^{\frac12}$ we deduce that $\sqrt\al\nh2u\leq \nh1u+\al\nh3u$ so
\begin{equation*}
F\leq C(\nh1{u}+\al\nh3{u}).   
\end{equation*}

To prove the reverse bound we observe first by the Poincaré inequality that $\nl2{\nabla u}\simeq\nh1u$.
Therefore, it suffices to show that
\begin{equation*}
F\geq C\al\nh3u.  
\end{equation*}

Observing that $\nl2{\nabla u}=\nl2\om$ we can write
\begin{equation*}
\nl2\oma=\nl2{\om-\al\Delta\om}\geq\al\nl2{\Delta \om}-\nl2\om =\al\nl2{\curl\Delta u}-\nl2{\nabla u}. 
\end{equation*}
Next, we recall that $\Delta u-\PP\Delta u$ is a gradient so $\curl\Delta u=\curl\PP\Delta u$. We infer from the previous relations that
\begin{equation*}
F\geq \sqrt\al\nl2{\PP\Delta u}+\al\nl2{\curl\PP\Delta u} 
\geq \al(\nl2{\PP\Delta u}+\nl2{\curl\PP\Delta u} )
\end{equation*}
where we used that $\al\leq1$. Since $\PP\Delta u$ is divergence free and tangent to the boundary, one can apply \cite[Proposition 1.4]{foias_remarques_1978} to deduce that
\begin{equation*}
 \nl2{\PP\Delta u}+\nl2{\curl\PP\Delta u}\geq C\nh1{\PP\Delta u}. 
\end{equation*}
Finally, the classical regularity result for the stationary Stokes operator says in particular that
\begin{equation*}
 \nh1{\PP\Delta u}\geq C\nh3u. 
\end{equation*}
We infer that $F\geq C\al\nh3u$ and this completes the proof of the lemma.
\end{proof}

We go back to the proof of Theorem \ref{globalthm}. From the Gagliardo-Nirenberg estimate
\begin{equation*}
  \lip u\leq C\nh1u^{\frac14}\nh3u^{\frac34},
\end{equation*}
from relation \eqref{finalbound} and from Lemma \ref{lemma1} we infer that
\begin{equation*}
  \al\lip {u(t)}\leq C\al^{\frac14}F(t)\leq C\al^{\frac14}F(0)\leq C(\al^{\frac14}\nh1{u_0}+\al^{\frac54}\nh3{u_0}).
\end{equation*}

From the Gagliardo-Nirenberg and Poincaré inequalities and from relations \eqref{h1est} and \eqref{h2est} we can bound 
\begin{equation*}
\nl3u\leq C\nl2u^{\frac12}\nh1u^{\frac12}\leq C\nl2u^{\frac12}\nl2{\nabla u}^{\frac12}\leq C (\nl2{u_0}^2+\al\nl2{\nabla u_0}^2)^{\frac14}( \nl2{\nabla u_0}^2+\al\nl2{\PP\Delta u_0}^2)^{\frac14}.
\end{equation*}

We conclude that for \eqref{condimpl} to be verified it suffices to assume the following conditions on the initial data:
\begin{gather}
C(\al^{\frac14}\nh1{u_0}+\al^{\frac54}\nh3{u_0})\leq \frac\nu{4}\label{small11}\\
\intertext{and}
 C (\nl2{u_0}^2+\al\nl2{\nabla u_0}^2)^{\frac14}( \nl2{\nabla u_0}^2+\al\nl2{\PP\Delta u_0}^2)^{\frac14}\leq \frac{\ep_1\nu}2.  \label{small22}
\end{gather}

We prove now that if our smallness assumptions \eqref{small1} and \eqref{small2} hold true for a sufficiently small constant $\ep$, then \eqref{small11} and \eqref{small22} are verified. Observe first that \eqref{small2} implies \eqref{small11}. Indeed, this follows immediately from the interpolation inequality $\nh1{u_0}\leq\nl2{u_0}^{\frac12}\nh2{u_0}^{\frac12}$. To prove \eqref{small22}, we notice that it is equivalent to
\begin{equation*}
\nl2{u_0}\nh1{u_0}+\al^{\frac12}\nh1{u_0}^2+\al^{\frac12}\nl2{u_0}\nh2{u_0}+\al\nh1{u_0}\nh2{u_0}\leq\frac{\ep_1^2\nu^2}{C}  
\end{equation*}
for some constant $C$. The necessary bounds for the first, the third and the fourth term on the left-hand side are included in the hypothesis \eqref{small1} and \eqref{small2} for $\ep$ small enough. The bound for the second term  shows up in \eqref{small11}, so it is already proved.

We conclude that if \eqref{small1} and \eqref{small2} hold true for a sufficiently small constant $\ep$, then \eqref{cond} never fails to be true and the solution $u$ exists globally. This completes the proof of Theorem \ref{globalthm}.

\section{Uniform local existence of solutions}

We show in this section Theorem \ref{local}.

To prove the uniform local existence of solutions, we will adapt the estimates from the previous section in the following manner.

We search for a constant $M$ and a time $T$ such that if we assume that
\begin{equation}\label{concl0}
  \nl2{\nabla u}\leq M\text{ and }\nl\infty{\nabla u}\leq\frac\nu{2\al}
\end{equation}
on some sub-interval $[0,T']\subset[0,T]$ then
\begin{equation}\label{concl}
  \nl2{\nabla u}\leq \frac M2\text{ and }\nl\infty{\nabla u}\leq\frac\nu{4\al}
\end{equation}
on the same sub-interval $[0,T']$. By time continuity it follows that relation \eqref{concl} holds true on $[0,T]$ if it holds true at the initial time. As in the previous section one easily checks that the second condition from \eqref{concl0} is verified at time $t=0$ if $\ep$ is sufficiently small. Indeed, the condition on the Lipschitz norm of $u_0$ is the same  as in \eqref{cond} and in Section 3 we proved it using bounds for $\nh1{u_0}$ and $\nh3{u_0}$ which are the same as those from \eqref{hyplocal}. For the first bound in \eqref{concl0} to hold true at time $t=0$ we impose the following condition on the constant $M$ (recall that $M$ will be chosed later):
\begin{equation*}
\nl2{\nabla u_0}< M.  
\end{equation*}

So let us assume that \eqref{concl0} holds true and let us prove relation \eqref{concl}. We adapt the estimates from the previous section.
The $H^1$ estimates are not needed anymore. The $H^2$ estimates must be modified. Relations \eqref{h2diffeq} and \eqref{h2diffeq2} remain valid and imply that
\begin{equation}\label{loch2}
 \partial_t(\nl2{\nabla u}^2+\al\nl2{\PP\Delta u}^2)+\nu\nl2{\PP\Delta u}^2  
\leq 2\int_\Om u\cdot\nabla u\cdot \PP\Delta u. 
\end{equation}

The estimate of the right-hand side must be modified in the following manner
\begin{align*}
2\int_\Om u\cdot\nabla u\cdot \PP\Delta u 
&\leq 2\nl\infty u\nl2{\nabla u}\nl2{\PP\Delta u}\\
&\leq C\nh1u^{\frac32}\nh2u^{\frac12} \nl2{\PP\Delta u} \\
&\leq C\nl2{\nabla u}^{\frac32} \nl2{\PP\Delta u}^{\frac32}\\
&\leq  \frac\nu2 \nl2{\PP\Delta u}^2+\frac{C_1}{\nu^3}\nl2{\nabla u}^6.
\end{align*}
for some constant $C_1$. We used the Poincaré and Young inequalities, the Gagliardo-Nirenberg estimate $\nl\infty u\leq C\nh1u^{\frac12}\nh2u^{\frac12}$ and the relation $\nh2u\leq C\nl2{\PP\Delta u}$. Using this estimate in \eqref{loch2} yields
\begin{equation*}
 \partial_t(\nl2{\nabla u}^2+\al\nl2{\PP\Delta u}^2)+\frac\nu2\nl2{\PP\Delta u}^2  
\leq \frac{C_1}{\nu^3}\nl2{\nabla u}^6.
\end{equation*}
Recalling that we assumed $\nl2{\nabla u}\leq M$ and integrating in time yields
\begin{equation*}
\nl2{\nabla u(t)}^2+\al\nl2{\PP\Delta u(t)}^2\leq \nl2{\nabla u_0}^2+\al\nl2{\PP\Delta u_0}^2+\frac{C_1tM^6}{\nu^3}  
\end{equation*}
If we make the  assumption that
\begin{equation}\label{boundm2}
\nl2{\nabla u_0}^2+\al\nl2{\PP\Delta u_0}^2+\frac{C_1TM^6}{\nu^3}\leq \frac{M^2}4  
\end{equation}
then we get that
\begin{equation}\label{17a}
\nl2{\nabla u(t)}^2+\al\nl2{\PP\Delta u(t)}^2\leq \frac{M^2}4.    
\end{equation}
In particular, we have that
\begin{equation*}
  \nl2{\nabla u(t)}\leq \frac M2
\end{equation*}
which implies the first half of \eqref{concl}.

The $H^3$ estimates remain valid up to the relation \eqref{h3diffeq2} that we recall now
\begin{equation*}
  \nl2{\oma(t)}^2\leq\nl2{\oma(0)}^2+4\sup_{s\in[0,t]}\nl2{\nabla u(s)}^2.
\end{equation*}
We can further bound $\nl2{\nabla u(s)}\leq M$ to obtain that
\begin{equation*}
 \nl2{\oma(t)}^2\leq\nl2{\oma(0)}^2+4M^2. 
\end{equation*}
Adding this estimate to \eqref{17a} yields
\begin{equation*}
  F(t)\leq CF(0)+CM.
\end{equation*}
We observed in the previous section that $\lip u\leq C\al^{-\frac34}F$ so we deduce that
\begin{equation*}
\lip{u(t)}\leq  C\al^{-\frac34}(F(0)+M)
\leq C_2 (\al^{-\frac34}\nh1{u_0}+\al^{\frac14}\nh3{u_0}+\al^{-\frac34}M)
\end{equation*}
for some constant $C_2$, where we used Lemma \ref{lemma1}.

We conclude that if we assume \eqref{boundm2} together with 
\begin{equation}\label{boundm3}
C_2 (\al^{-\frac34}\nh1{u_0}+\al^{\frac14}\nh3{u_0}+\al^{-\frac34}M)\leq\frac\nu{4\al}  
\end{equation}
then \eqref{concl} holds true.

To satisfy the condition \eqref{boundm2} it suffices to assume that
\begin{equation}\label{f1}
\nl2{\nabla u_0}\leq \frac M{2\sqrt3}\qquad\text{and}\qquad \sqrt\al\nl2{\PP\Delta u_0}\leq \frac M{2\sqrt3}  
\end{equation}
and to choose $T$ such that
\begin{equation*}
  \frac{C_1TM^6}{\nu^3}=\frac{M^2}{12}
\end{equation*}
that is
\begin{equation}\label{f3}
  T=\frac{\nu^3}{12C_1M^4}.
\end{equation}

On the other hand, to ensure that \eqref{boundm3} holds true it suffices to assume that
\begin{equation}\label{f2}
  \nh1{u_0}\leq \frac\nu{12C_2\al^{\frac14}}, \qquad  \nh3{u_0}\leq \frac\nu{12C_2\al^{\frac54}}\qquad\text{and}\qquad M\leq \frac\nu{12C_2\al^{\frac14}}.
\end{equation}

Clearly, to be able to find a constant $M$ such that \eqref{f1} and \eqref{f2} are satisfied we need to assume the following conditions on the initial data:
\begin{equation}\label{condi}
\nh1{u_0}\leq \ep\nu\al^{-\frac14}, \qquad   \nh2{u_0}\leq \ep\nu\al^{-\frac34}\qquad\text{and}\qquad  \nh3{u_0}\leq \ep\nu\al^{-\frac54} 
\end{equation}
for a sufficiently small $\ep$.
In fact, the condition on the $H^2$ norm is not necessary since the interpolation inequality $\|\cdot\|_{H^2}\leq\|\cdot\|_{H^1}^{\frac12}\|\cdot\|_{H^3}^{\frac12}$ shows that it follows from the other two conditions.

Once  \eqref{condi} is satisfied, we can choose
\begin{equation*}
M=C_3(\nh1{u_0}+\sqrt\al\nh2{u_0})  
\end{equation*}
for some constant $C_3$. According to \eqref{f3}, this gives a time of existence of the solution of the form
\begin{equation*}
T=\frac{\nu^3}{12C_1C_3^4 (\nh1{u_0}+\sqrt\al\nh2{u_0})^4 }. 
\end{equation*}
This completes the proof of Theorem \ref{local}.

\section*{Acknowledgments}

This work was supported by the LABEX MILYON (ANR-10-LABX-0070) of Université de Lyon, within the program "Investissements d'Avenir" (ANR-11-IDEX-0007) operated by the French National Research Agency (ANR).

\end{document}